\theoremstyle{plain}
  \newtheorem{theorem}{Theorem}[section]
  \newtheorem{proposition}[theorem]{Proposition}
  \newtheorem{lemma}[theorem]{Lemma}
  \newtheorem{corollary}[theorem]{Corollary}
\theoremstyle{definition}
  \newtheorem{definition}[theorem]{Definition}
  \newtheorem{example}[theorem]{Example}
 \theoremstyle{remark}
  \newtheorem{remark}[theorem]{Remark}
\numberwithin{equation}{section}
\def\complexes{{\mathbb C}}
\def\im{\mathrm{im}}
\def\rank{\mathrm{rank}}
\def\Lyndon{\mathrm{Lyndon}}
\begin{document}

\title
[Symmetric chain decompositions for quotient posets]
{Symmetric chain decomposition for cyclic quotients of Boolean algebras and 
relation to cyclic crystals}

\author[P. Hersh]{Patricia Hersh}
\address{Department of Mathematics, North Carolina State University, Raleigh, NC 27695-8205}
\email{plhersh@ncsu.edu}

\author[A. Schilling]{Anne Schilling}
\address{Department of Mathematics, UC Davis, One Shields Ave., Davis, CA 95616-8633}
\email{anne@math.ucdavis.edu}

\thanks{The authors were supported by NSF grants 
DMS--1002636  and DMS--0652641, DMS--0652652, DMS--1001256, respectively.
}

\begin{abstract}
The quotient of a Boolean algebra by a cyclic group is proven to have a symmetric chain decomposition.  
This generalizes earlier work of Griggs, Killian and Savage on the case of prime order, giving an explicit
construction for any order, prime or composite. The combinatorial map specifying how to proceed downward 
in a symmetric chain is shown to be a natural cyclic analogue of the $\mathfrak{sl}_2$ lowering operator in
the theory of crystal bases.
\end{abstract}

\maketitle

\section{Introduction}

Griggs, Killian, and Savage~\cite{GKS} have given a symmetric chain decomposition for the 
quotient of a Boolean algebra $B_n$ by a cyclic group $C_n$ in the case where $n$ is prime.
Their paper was inspired by the search of symmetric Venn diagrams by showing a beautiful connection
to the Boolean lattice up to cyclic invariance, whose elements are called necklaces. Their construction 
introduces necklace representatives for the symmetric chain decomposition of Greene and Kleitman~\cite{GK} for 
$B_n$. There exist accessible surveys on these connections in~\cite{RSW,WW}.
We generalize the symmetric chain decomposition for $B_n/C_n$ to all $n$ through the use of Lyndon 
words~\cite{Re} and the theory of crystal bases~\cite{HK:2002}.

Our construction is based on a completely explicit injective map $\phi $ which takes a quotient 
poset element to an element it covers, and which may  be interpreted as a cyclic analogue of the 
lowering operator for $\mathfrak{sl}_2$ crystals. Two elements $x,y$ in the quotient poset
$B_n/C_n$ belong to the same symmetrically placed chain in our symmetric chain
decomposition if and only if  either $x = \phi^r (y)$ or 
$y = \phi^r (x)$ for some $r$.  

We use properties of Lyndon words (see for example~\cite{Re}) to define $\phi $ directly on the quotient poset and
prove it is injective. The definition of $\phi$ is then rephrased in a simpler way that
makes it evident that $\phi$ is a cyclic analogue of the usual  
$\mathfrak{sl}_2$ lowering operator
for crystals.  This rephrasing involves a matching of 0's with 1's and then proceeding downward
through a symmetrically placed chain by changing unmatched 1's to 0's in a Lyndon word from
right to left. It was inspired by the approach of~\cite{GKS}.
  
A different proof  for all $n$ of the existence of a symmetric chain decomposition (SCD) was given prior to ours 
by Jordan in~\cite{KKJ}.  Specifically, she provided an algorithm to produce an SCD for any $n$.  
The resulting SCD is  not the same one we obtain.  
Jordan's result was subsequently generalized in~\cite{DMT:2011} to show that
$B_n/G$ is a symmetric chain order whenever $G$ is generated by powers of disjoint cycles
in the symmetric group. Dhand~\cite{Dh} has also done related work.
Our work was done independently 
of~\cite{Dh}, \cite{DMT:2011} and~\cite{KKJ}; our approach is in fact quite different than is taken in these papers.  

One reason for interest in finding symmetric chain decompositions 
stems back to work of Stanley (cf.~\cite{Sta0, Sta2}) and Proctor (cf.~\cite{Proctor}), 
among others, in the 1980's on unimodality questions for rank
generating functions -- since a symmetric chain decomposition implies unimodality. 
In 1977,  Griggs gave quite general sufficient conditions in~\cite{Gr} for a poset to admit a symmetric chain 
decomposition by a clever argument involving the max flow-min cut theorem as well as Hall's marriage theorem.

Subsequently, a powerful approach to unimodality questions  was developed by using
the representation theory of $\mathfrak{sl}_2$, since dimensions of weight spaces of the 
same parity in any
$\mathfrak{sl}_2$-representation necessarily form a unimodal sequence.  
In this paper we show that this is also possible in the case of 
the quotient of a Boolean algebra by a cyclic group using crystal bases.

\subsection*{Acknowledgments}
We are indebted to Dennis Stanton for very helpful discussions.
We would also like to thank Georgia Benkart for her insight on unimodality of
crystals and the reduction to the $\mathfrak{sl}_2$ case. Thanks to Georgia Benkart, Monica Vazirani,
Stephanie van Willigenburg, and the Banff International Research Station for providing a stimulating
environment during the Algebraic Combinatorixx workshop, enabling the connection
to be made there between symmetric chain decomposition and crystal lowering
operators.

\section{Symmetric chain decomposition}

We regard the elements of the quotient poset $B_n/C_n$
as equivalence classes of words in $\{ 0,1\}^n$, where two words are 
equivalent if they differ by a cyclic shift of the position indices.  
We often speak of words,
when in fact we always mean equivalence classes of words.  Our poset is graded with
rank function being the number of 1's in a word.  We have $u\le v$ if and only if
there exist cyclic
rearrangements of $u$ and $v$, denoted $\sigma_i(u)$ and $\sigma_j(v)$, 
such that the set of positions with value 1 in $\sigma_i(u)$ is a 
subset of that of $\sigma_j(v)$. Here $\sigma_i$ denotes the rotation which increases indices 
by $i$ mod $n$.

\begin{example}
We have $01001\le 11110$ because $\sigma_1(11110) = 01111$.
\end{example}

Our plan is to define a completely explicit 
map $\phi $ that will tell us how to proceed downward through 
each chain in the symmetric chain decomposition. 

\begin{definition}
Given an ordered alphabet $\mathcal{A}$ and a word in $w\in \mathcal{A}^n$, define
the {\it Lyndon rearrangement} of $w$ to be the lexicographically smallest word obtained
by a cyclic rotation of the letters in $w$.  The resulting word is called a {\it Lyndon 
word}.
\end{definition}

\begin{example}
We use the alphabet $\mathcal{A} = \{ 0, 1\} $ with ordering $1\prec 0$.  Then  $00011101001100111001$ 
has $11101001100111001000$ as its Lyndon rearrangement.
\end{example} 

Now let us define the map $\phi $ which has as its domain all elements of the top half of
the quotient poset and those elements from the bottom half of the poset which do not turn
out to be bottom elements of chains in the symmetric chain decomposition.  The map 
$\phi $ sends  any poset element $x$ upon which it acts 
to an element $\phi (x)$ which  $x$ covers.
We focus first on the case of words with more 1's than 0's, i.e. the top half of the quotient
poset; we will handle the case of words
with at least as many 0's as 1's in a different manner that will depend on our map on the 
top half of ranks in the quotient poset.

Given a word of 0's and 1's, cyclically rotate it into its Lyndon rearrangement.
Now apply the following process repeatedly until all unmatched elements are 1's: take any
0 that is immediately followed by a 1 (cyclically) and 
match these pairs of letters, removing them from 
further consideration.  Using parentheses to depict our pairing, here is an example of 
this algorithm: $1101100110 \rightarrow 1)1(01)10(01)1(0 \rightarrow 1)1(01)1(0(01)1)(0$.

Call each of these 01 pairs  a {\it parenthesization pair}, or sometimes a {\it matching pair}.  
If there is at least one unpaired 1 at the end of this process, then $\phi $ maps the word 
to a word in which the rightmost unmatched 1 in the Lyndon expression is changed to a 0.
For example, $\phi $ maps $1)1(01)1(0(01)1)(0$ to $1101000110$.  Notice that the 
Lyndon rearrangement of this new word is $1101101000$ and this has bracketing
$1)1)(01)1)(01)(0(0(0$.  In particular, changing the 1 to a 0 created a new parenthesization
pair involving the letter changed from a 1 to a 0 together with another 1 which had been 
unmatched, provided there are at least two unmatched 1's just prior to the application of 
$\phi $.  We refer to  this pair of  letters
 the most recently created parenthesization pair.  If there is only a single
unmatched 1, this is still changed by $\phi $ to a 0, now yielding a word with a single 
unmatched letter, with it now being a 0.

Define this map $\phi $ on the lower half of ranks 
by successively undoing the most recently created parenthesization pair by turning its
letter that is still a 1 into a 0; for this map to be well-defined, we will need to prove
that for each element
$x$ in the lower half of the poset there is a unique chain
leading to it from the top half of the poset 
by successive application of $\phi $, since the definition of $\phi $ on the lower half
depends on this chain leading to $x$ from above.  This will be accomplished by
an induction on $n - \rank(x)$  once we have completed the proof of 
injectivity for $\phi $ on the top half of ranks.   

\begin{example}
Repeated application of $\phi $ to $1111011001011110000$ yields
the symmetric chain
\begin{equation*}
\begin{array}{cccccccccccccccccccc}
	1&1&1&1&0&1&1&0&0&1&0&1&1&1&1&0&0&0&0&\rightarrow \\
	)&)&)&)&(&)&&(&(&)&(&)&)&&&(&(&(&(
\end{array}
\end{equation*}
\vspace{.1in}
\begin{equation*}
\begin{array}{cccccccccccccccccccc}
		1&1&1&1&0&1&1&0&0&1&0&1&1&1&0&0&0&0&0&\rightarrow \\
	)&)&)&)&(&)&)&(&(&)&(&)&)&&(&(&(&(&(
\end{array}
\end{equation*}
\vspace{.1in}
\begin{equation*}
\begin{array}{cccccccccccccccccccc}
	1&1&1&1&0&1&1&0&0&1&0&1&1&0&0&0&0&0&0&\rightarrow \\
	)&)&)&)&(&)&)&(&(&)&(&)&)&&(&(&(&(&(
\end{array}
\end{equation*}
\vspace{.1in}
\begin{equation*}
\begin{array}{cccccccccccccccccccc}
	1&1&1&1&0&1&0&0&0&1&0&1&1&0&0&0&0&0&0&\; . \\
	)&)&)&)&(&)&&(&(&)&(&)&)&&&(&(&(&(
\end{array}
\end{equation*}
\vspace{.1in}

\end{example}

\begin{definition} 
Let $a_i,a_j,a_k,a_l$ be letters in a word $w$. We can match two such letters by drawing an
arc or edge between them. We say that the edge from $a_i$ to $a_k$ {\it crosses}
the edge from $a_j$ to $a_l$ if we have $i < j< k < l$ in some cyclic rearrangement of $w$.  
A parenthesization pair $\{ a_i, a_k\} $ {\it crosses} another pair if the edges between them cross.
\end{definition}

\begin{remark}
Notice that the set of parenthesization pairs in a word is noncrossing and invariant
under cyclic rotation.
\end{remark}

In fact, we will want the following related property:

\begin{lemma}\label{non-cross-lemma}
We cannot have a pair of unmatched 1's that crosses a parenthesization pair.
\end{lemma}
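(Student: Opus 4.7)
The plan is to argue by contradiction. Assume $\{a_i,a_k\}$ is a pair of unmatched $1$'s that crosses a parenthesization pair $\{a_j,a_l\}$. Since parenthesization pairs are $01$ pairs, we may take $a_j=0$ and $a_l=1$, and by the definition of crossing we may choose a cyclic rearrangement in which $i<j<k<l$.

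The main structural fact I would establish first is the following statement about the matching algorithm: whenever a $0$ at position $p$ gets paired with a $1$ at position $q$, every position strictly between $p$ and $q$ on the cyclic arc along which $p$ was ``immediately followed by $q$'' at the moment of pairing holds a letter already matched to another letter on that same arc. I would prove this by induction on the round of the algorithm in which $\{a_p,a_q\}$ is formed: at that round $p$ and $q$ are literally adjacent in the reduced word, so every intervening letter in the original word was removed in a previous round; by the inductive hypothesis each such earlier pair is contained in the same arc, so nothing from that arc remains unmatched, and no matched pair straddles $\{a_p,a_q\}$.

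Applying this to $\{a_j,a_l\}$: reading cyclically forward from $j$ in the rearrangement $i<j<k<l$, we meet $k$ strictly before $l$, so $a_k$ lies in the interior of the forward arc from $j$ to $l$. The structural fact then forces $a_k$ to be matched, contradicting our assumption that $a_k$ is an unmatched $1$.

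The main obstacle I anticipate is the cyclic bookkeeping in the structural fact: a parenthesization pair determines two arcs on the cycle, and the argument depends on identifying the ``forward'' arc (from the $0$ to the $1$, in the sense used by the algorithm) as the one that contains only matched letters. Once that is pinned down, the crossing hypothesis automatically places one of $a_i,a_k$ on the forward arc and the contradiction is immediate; so the real work is simply a clean inductive formalization of how the greedy $01$-matching propagates.
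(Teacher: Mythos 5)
Your proof is correct and takes essentially the same approach as the paper: both argue that the $0$ of the parenthesization pair would have been matched to whichever unmatched $1$ lies on the arc between it and its actual partner. The paper compresses this into one sentence (``the $0$ would prefer to be matched with one of these $1$'s''), whereas your induction on the rounds of the greedy matching supplies the structural fact that makes that preference argument rigorous.
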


\begin{proof}
One endpoint of the matching edge would be a 0, while the other end would be a 1.
If this edge were crossing with some unmatched pair of 1's, then the  0 in the 
parenthesization pair would prefer to be matched with one of these
1's   than with its current matching partner, a contradiction to our parenthesization 
pair construction.
\end{proof}

Thus, all unmatched 1's are in the same connected component once noncrossing arcs are 
inserted connecting elements of parenthesization pairs (arranged around a circle).

\begin{corollary}
The map $\phi $ applied to any element in the upper half of ranks will not change the 
collection of parenthesization pairs other than creating one 
additional pair comprised of the letter transformed by $\phi $ from a 1 to a 0 along with 
the leftmost 1 that is unmatched just prior to this application of $\phi $.
\end{corollary}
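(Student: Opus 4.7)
The plan is to analyze how the parenthesization procedure responds to switching an unmatched 1 into a 0. Since by Remark 2.4 the parenthesization is invariant under cyclic rotation, I will work throughout on the cyclic word. Let $w$ be the Lyndon form of the element, let $P$ denote its set of parenthesization pairs, and let $p_1 < p_2 < \cdots < p_k$ be the positions of the unmatched 1's in $w$; the map $\phi$ changes position $p_k$ from 1 to 0. I assume $k \geq 2$, since the case $k=1$ is addressed by the discussion preceding the corollary.

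The first step is to show that every pair in $P$ remains a parenthesization pair of $\phi(w)$. Since only the letter at $p_k$ has changed, and $p_k$ was unmatched, every pair $\{i,j\} \in P$ still consists of a 0 at position $i$ cyclically followed by a 1 at position $j$, in exactly the same cyclic configuration as before. Because the parenthesization procedure produces the unique noncrossing matching of 0's to subsequent 1's (iteratively peeling off any adjacent 01 pair, as with balanced parentheses on a circle, where the output is independent of the order of peeling), the collection $P$ is automatically part of the parenthesization of $\phi(w)$.

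The second step is to identify the new pair. After removing all pairs of $P$ from $\phi(w)$, the remaining letters are the 1's at $p_1, \ldots, p_{k-1}$ together with a single 0 at $p_k$. By Lemma~\ref{non-cross-lemma}, no arc of $P$ separates any two of the $p_i$, so all of $p_1, \ldots, p_k$ lie in a common region of the cyclic diagram not blocked by any arc of $P$; the cyclic reading of the leftover letters is therefore $1,1,\ldots,1,0$. The parenthesization procedure then matches the 0 at $p_k$ with the 1 cyclically immediately following it, which is the 1 at $p_1$, leaving the 1's at $p_2, \ldots, p_{k-1}$ unmatched for want of a 0 partner. Hence the parenthesization of $\phi(w)$ is exactly $P \cup \{(p_k,p_1)\}$, and $p_1$ is precisely the leftmost unmatched 1 of $w$, as claimed.

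The main obstacle is justifying that the parenthesization procedure is a well-defined noncrossing matching whose outcome on $\phi(w)$ is forced to contain $P$ as a subset; this amounts to the standard fact that a balanced-parentheses matching is independent of the order in which adjacent matching pairs are peeled off. The other delicate point is excluding the possibility that the new 0 at $p_k$ finds a different partner than $p_1$, which is controlled by the circular containment property supplied by Lemma~\ref{non-cross-lemma}.
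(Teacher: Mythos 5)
Your proof is correct and takes essentially the same route as the paper, whose entire argument is a citation of the non-crossing property in Lemma~\ref{non-cross-lemma}; you simply supply the details the paper leaves implicit, namely the order-independence of the peeling procedure (which guarantees the old pairs persist) and the identification of the new partner of the flipped letter as the leftmost previously unmatched 1. No gaps.
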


\begin{proof}
This follows from Lemma ~\ref{non-cross-lemma}.
\end{proof}

In the next proof, we use the notation
$b_1\dots b_r \le_{\Lyndon} c_1\dots c_r$ to denote the fact
that $b_1\dots b_r$ is at least as small 
lexicographically as $c_1\dots c_r$.

\begin{proposition}
The map $\phi $ is injective on the  upper half of ranks.
\end{proposition}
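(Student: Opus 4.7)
The plan is to construct an explicit (partial) inverse for $\phi$ on its image. Given $w = \phi(u)$ for some $u$ in the upper half of ranks, the goal is to describe a procedure that reconstructs $u$ from $w$ alone; this will immediately show $\phi$ is injective.

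First I would pass to the Lyndon rearrangement $L(w)$ and compute its parenthesization pairs via the matching procedure, then compare them with the parenthesization pairs of $L(u)$. By the preceding corollary, one application of $\phi$ creates exactly one new parenthesization pair --- the letter just changed from $1$ to $0$, paired with the previously leftmost unmatched $1$ --- and leaves every other pair unchanged. So the inversion problem reduces to identifying this distinguished ``new'' pair in $L(w)$ and flipping its $0$ back to a $1$.

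Second, I would pin down where the new pair must sit inside $L(w)$. Because $L(u)$ was the lexicographically smallest cyclic rotation (with $1 \prec 0$) and the only change made by $\phi$ was a single $1 \mapsto 0$, the position of the new $0$ relative to the remaining unmatched $1$'s is essentially forced: after passing to the Lyndon rearrangement, the new pair should be the unique parenthesization pair whose two endpoints enclose all of the remaining unmatched $1$'s. The noncrossing property established in Lemma~\ref{non-cross-lemma} is what makes this ``enclosing'' description well-defined, since it forces the unmatched $1$'s to form a single connected block once matched pairs are drawn as noncrossing arcs around the circle.

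Third, I would show that this distinguished pair can be recognized from $L(w)$ alone, using $\le_{\Lyndon}$ comparisons. The key point is that any other choice of pair to ``un-match'' would produce a candidate preimage whose Lyndon rearrangement beats $L(u)$ lexicographically, contradicting the minimality that defines $L(u)$. Once the distinguished pair is intrinsically identified, changing its $0$ back to a $1$ and re-rotating to Lyndon form recovers $L(u)$ uniquely, and hence the equivalence class $u$. I expect the main obstacle to be precisely this last step: a careful lexicographic argument comparing the candidate Lyndon rearrangements arising from different choices of ``undone'' pair, which requires controlling the interaction between the Lyndon ordering and the noncrossing matching structure, especially in the degenerate case where $u$ had only a single unmatched $1$ (so that $\phi(u)$ itself has an unmatched $0$).
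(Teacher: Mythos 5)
Your overall strategy---exhibit an explicit inverse on the image of $\phi$---is a legitimate route to injectivity and is in some sense dual to what the paper does (the paper assumes two preimages $A,A'$ of the same word and derives a contradiction). But as written the proposal does not contain a proof: the decisive step is the one you yourself defer at the end (``I expect the main obstacle to be precisely this last step''), namely showing that the pair created by $\phi$ can be recognized from $\phi(u)$ alone. That recognition problem is exactly where all the mathematical content lives, and it is what the paper's argument resolves with its chain of comparisons $a_s\dots a_j \le_{\Lyndon} a_1\dots a_{j-s+1} \le_{\Lyndon} a_1'\dots a_{j-s+1}' \le_{\Lyndon} a_s'\dots a_j' <_{\Lyndon} a_s\dots a_j$ after first showing that the segment strictly between the two differing positions must be fully matched within itself. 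Nothing in your outline substitutes for that argument.

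Moreover, the intrinsic criterion you propose for the new pair is not correct as stated. First, it is not unique: every pair created by an earlier application of $\phi$ is nested around the remaining unmatched $1$'s, and so (typically) is the pair that cyclically brackets the terminal run of $0$'s with the initial $1$'s in the highest-weight word; for instance in $11100010110 = \phi(11110010110)$ both $(4,2)$ and $(11,1)$ have the lone unmatched $1$ at position $3$ between their endpoints. You would need ``innermost,'' and then a proof that the newly created pair, which wraps around the end of the Lyndon word from the rightmost to the leftmost formerly unmatched $1$, is always strictly inside any pre-existing enclosing pair. Second, when $u$ has exactly two unmatched $1$'s ($n$ even, $u$ one rank above the middle), $\phi(u)$ is fully matched, the set of ``remaining unmatched $1$'s'' is empty, and your criterion selects nothing. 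Third, the criterion applied to a word \emph{not} in the image of $\phi$ (e.g.\ the highest-weight word $11110010110$ itself) cheerfully returns a bogus ``preimage,'' so you would additionally have to characterize the image --- which in the paper is only done later, via Proposition~\ref{move-right} and Corollary~\ref{nested-paren}, and those results are proved \emph{after} and partly on the strength of this injectivity proposition. So the proposal, in its current form, has a genuine gap rather than merely a stylistic difference from the paper.
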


\begin{proof}
Suppose  two different  cyclic
words $A = a_1\ldots a_n$ and $A' = a_1'\ldots a_n'$ 
satisfy $\phi(A) = \phi (A') = B$, with $A,A'$ each having more
1's than 0's.  Without loss of generality, we may
assume $A$ and $A'$
agree except that $a_i=1,a_i'=0,a_j=0,a_j'=1$ for some $i<j$ where 
$B$ agrees with both except that $B$ has $b_i=b_j=0$. 
That is, $\phi $ changes $a_i$ from 1 to 0 in $A$, $\phi $ changes $a_j'$ from 1 to 0 in 
$A'$.  We may also assume that
one of the words $A,A'$ is written as a Lyndon word, so that either $a_i$ or $a_j'$ is the
rightmost unmatched 1 in its Lyndon word.  That is, we have written one of the words
$A$ or $A'$ using its Lyndon rearrangement, and we have written the other in its 
rearrangement so that the words agree except in the two locations where one or the 
other word has a 1 switched to a 0 by $\phi $. We do not know a priori which of the two
words is written as a Lyndon word, but rather we will consider both cases.

Our plan is to show in both cases that 
all of the letters in the word $W(A) = W(A') = a_{i+1}\dots a_{j-1} $ 
belong to parenthesization pairs with other letters also in this segment,
implying $A'$ has $a_i'$ belonging to a parenthesization pair together 
with $a_j'$, a contradiction to $\phi $ acting on $A'$ by changing $a_j'$ to a 0.  

Suppose first that
$A$ is Lyndon.  If there is
an unmatched 1 in $W(A)$, this would contradict our definition of $\phi $: $\phi $
turns the rightmost
unmatched 1 into a 0 in the Lyndon rearrangement of $A$, since $a_i$ would then not
be rightmost.  There cannot be a 1 in $W(A)$ that is matched with a 0 outside $W(A)$, 
since this 0 outside $W(A)$ would instead match with $a_i$. 
On the other hand, if there were a 0 in $W(A')$ that is not matched
with any 1 in $W(A')$, this 
would match with $a_j'$, contradicting 
$a_j'$ being unmatched.  Thus, $W(A')$ must be fully matched, implying
$a_i'$ matches with $a_j'$, as desired. 

Now suppose instead that $A'$ is Lyndon.  Again, there cannot be any 0's in $W(A')$ that
are not matched with 1's in $W(A')$, since any such 0 would match with 
$a_j'$.  Suppose there is some $a_k'=1$ 
in $W(A')$ that is not matched
within $W(A')$.  Then the corresponding 1 in $W(A)$, i.e. $a_k=1$,
is also unmatched in $A$, hence must appear 
farther to the left than $a_i$ in the Lyndon rearrangement of $A$.   Thus, the Lyndon
rearrangement of $A$ would begin with a letter $a_s$ to the right of $a_i$ and to the left of 
$a_{k+1}$, whereas the Lyndon rearrangement of $A'$ begins with $a_1'$.  But then we would 
have $a_s\dots a_j \le_{\Lyndon} a_1 \dots a_{j-s+1} \le_{\Lyndon} a_1'\dots a_{j-s+1}'
\le_{\Lyndon} a_s'\dots a_j' = a_s\dots a_{j-1}a_j' <_\Lyndon a_s\dots a_j$, a contradiction.
Thus, each 1 in $W(A')$ is also matched within $W(A')$, so $W(A')$ is fully matched, implying 
that $a_i'$ again comprises a parenthesization pair with $a_j'$, again  a contradiction.
\end{proof}

For $n$ odd, the map at the middle pair of ranks is perhaps worth special discussion:

\begin{proposition}
For $n$ odd, the map $\phi $ is injective between the two middle ranks, i.e. when it
turns the unique unmatched 1 into a 0.
\end{proposition}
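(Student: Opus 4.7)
The plan is to deduce this proposition directly from the previous one. For $n$ odd, the rank $(n+1)/2$ strictly exceeds $n/2$, so it already lies in the upper half of the quotient poset where the previous proposition establishes injectivity of $\phi$. The only extra observation needed is the justification of the phrase ``the unique unmatched 1'': I would argue that the matching algorithm halts precisely when the cyclic sequence of unmatched letters admits no occurrence of a 0 immediately followed by a 1, which on a cycle forces all unmatched letters to share a common value; combined with the fact that a word at rank $(n+1)/2$ has exactly one more 1 than 0 and that each parenthesization pair consumes one 0 and one 1, it follows that after the algorithm terminates exactly one unmatched 1 remains (and no unmatched 0).

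If a more direct argument is desired, I would specialize the proof of the previous proposition. Let $A \ne A'$ be two distinct words at rank $(n+1)/2$ with $\phi(A) = \phi(A') = B$. Align them so that $A$ and $A'$ agree except at positions $i < j$ with $a_i = 1$, $a_j = 0$, $a_i' = 0$, $a_j' = 1$, and one of $A$ or $A'$ is in Lyndon form. The goal is to show that the intermediate segment $W = a_{i+1} \cdots a_{j-1}$ is fully internally matched in whichever word is Lyndon, forcing $a_i'$ and $a_j'$ to form a parenthesization pair in $A'$, which contradicts $a_j'$ being the unique unmatched 1 of $A'$.

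The potentially delicate point, and the only real departure from the general proof, lies in Case 2 (with $A'$ in Lyndon form). There the previous argument invoked ``another unmatched 1 $a_k$ in $A$ farther to the left than $a_i$'' and then ran a lexicographic Lyndon comparison; in our setting no such $a_k$ exists, since $a_i$ is the unique unmatched 1 of $A$. This turns out to simplify rather than obstruct the argument: the hypothesis that some $a_k' = 1$ in $W(A')$ is unmatched within $W(A')$ leads immediately to a contradiction, because the corresponding $a_k$ in $A$ would then have to be an unmatched 1 distinct from $a_i$, which cannot occur at the middle rank. Hence $W(A')$ is fully internally matched, yielding the desired contradiction exactly as in the Case 1 template. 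I expect this shortcut, made possible by the uniqueness of the unmatched 1, to be the cleanest way to present the middle-rank case, and I expect no additional obstacle beyond verifying that Lemma~\ref{non-cross-lemma} still supplies the non-crossing structure that places all relevant 0s and 1s correctly with respect to the pair $\{a_i', a_j'\}$.
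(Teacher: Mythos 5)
Your proof is correct, but it takes a genuinely different route from the paper's. The paper proves this proposition by exhibiting the inverse directly: at rank $(n+1)/2$ the unique unmatched 1 is the only unmatched letter, so turning it into a 0 cannot create or destroy any parenthesization pair; the image word therefore has a unique unmatched letter, now a 0, and flipping that 0 back to a 1 recovers the preimage. Your first argument instead observes that rank-$(n+1)/2$ words have more 1's than 0's and hence already fall under the previous proposition; this reduction is legitimate (the paper's phrase ``perhaps worth special discussion'' suggests the authors likewise regard the middle-rank case as essentially subsumed), and your counting argument for uniqueness of the unmatched 1 --- the algorithm halts only when the cyclic word of unmatched letters is constant, and the excess of 1's over 0's is exactly one --- is sound. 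Your second argument, specializing the previous proof, is also valid: the lexicographic comparison in Case 2 is needed there only to rule out a second unmatched 1 in $A$, which is automatic at the middle rank, though you inherit the same terse step the paper glosses over (that a 1 of $W$ unmatched within $W$ is unmatched in all of $A$, which rests on the non-crossing structure around the unmatched $a_i$). What the paper's version buys that yours does not is an explicit description of $\phi^{-1}$ on the image of the middle rank, which is precisely the device iterated in the subsequent proof of injectivity on the lower half of ranks; your version buys economy, making clear that the proposition is logically a corollary of the preceding one.
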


\begin{proof}
This is immediate from the fact that changing this 1 to a 0 cannot impact the 
collection of parenthesization pairs, so that we may look at an element of $\im(\phi )$ 
having one more 0 than 1 and determine its inverse by seeing which 0 is unmatched.
\end{proof} 

\begin{proposition}
The map $\phi $ is injective on the lower half of ranks.
\end{proposition}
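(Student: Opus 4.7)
The plan is to prove the proposition by descending induction on the rank, simultaneously establishing that $\phi$ is both well-defined and injective at each rank in the lower half; the base cases (upper half, and the middle pair of ranks for $n$ odd) are handled by the preceding propositions.

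For the inductive step at rank $r$ in the lower half, I assume $\phi$ is well-defined and injective at all ranks greater than $r$. First I would address well-definedness: for $x$ at rank $r$ in the image of $\phi$ applied to rank $r+1$, the preimage is unique by the inductive injectivity at rank $r+1$, and its chain from the top is unique by the inductive well-definedness; appending $x$ yields the unique chain from the top to $x$, which determines the most recently created parenthesization pair and hence defines $\phi(x)$ unambiguously.

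For injectivity at rank $r$, suppose $\phi(A) = \phi(A') = B$ with $A, A'$ both at rank $r$. Each of $A$ and $A'$ has a unique chain from the top by the well-definedness just established, and both chains extend by one further step to $B$. Lifting $A$ via iterated $\phi^{-1}$ through successively higher ranks produces a uniquely determined sequence using injectivity at all ranks greater than $r$; the same holds for $A'$. To conclude $A=A'$, I would invoke the corollary to Lemma~\ref{non-cross-lemma}: the parenthesization pairs of $A$ equal those of $B$ together with one additional pair $P_A$, namely the most recent pair in $A$'s chain, and similarly $\mathcal{P}_{A'} = \mathcal{P}_B \cup \{P_{A'}\}$. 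The noncrossing property on the circle, together with the outer-to-inner order in which chain pairs are created (the most recent pair being the outermost among the pairs added during the upper-half portion of the chain that have not yet been undone), forces this additional pair to be determined by $B$ and the already-unique upward lifts, giving $P_A = P_{A'}$ and hence $A=A'$.

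The hard part will be this last identification of the most recently created pair intrinsically from $B$ together with the higher-rank chain structure; making precise what ``outermost among not-yet-undone pairs'' means requires some care with the cyclic nesting picture and the unmatched-letter bookkeeping developed earlier. An alternative route that sidesteps this analysis is to establish the identity $\phi(x) = \overline{\phi^{-1}(\bar x)}$ on the lower half via the complement involution on $B_n/C_n$, which is an order-reversing bijection interchanging the two halves; if this identity holds then injectivity on the lower half reduces immediately to injectivity of $\phi$ on the upper half, the verification amounting to tracking how the parenthesization structure transforms under complementation and matching it against the definition via the chain.
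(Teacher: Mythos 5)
There is a genuine gap on both of the routes you sketch. On the main route, the step you yourself flag as ``the hard part'' --- recovering the most recently created pair $P_A$ intrinsically from $B$ together with the higher-rank chain data --- is precisely the content that has to be proved, and nothing in your sketch supplies it. (A secondary slip: by Corollary~\ref{nested-paren} the pairs created along a chain are nested from the outside in and are undone from the inside out, so the most recently created pair among those not yet undone is the \emph{innermost} of the surviving created pairs, not the outermost as you write.) The paper sidesteps the identification problem entirely with one additional idea that your proposal is missing: given $u$ in the lower half, parenthesize $u$ and flip every \emph{unmatched $0$} to a $1$; this produces, by an explicit local rule, the unique element $v$ of $u$'s chain sitting symmetrically above the middle. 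Since every chain through $u$ must pass through this $v$, and since $\phi$ is already known to be injective on the upper half and (by induction on corank) well-defined strictly above $u$, one gets $\phi^{-1}(u)=\phi^{r-1}(v)$ at once; both well-definedness and injectivity on the lower half fall out of the uniqueness of $v$. That construction of the symmetric partner is the missing ingredient.

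The alternative route cannot be repaired, because the identity $\phi(x)=\overline{\phi^{-1}(\bar x)}$ is false: complementation does not carry chains of this decomposition to reversed chains. Take $n=6$. The chain with top $111010$ is $111010>110010>100010$, and the chain with top $110110$ is $110110>110100>100100$. Now let $x=110010$, which has as many $0$'s as $1$'s and is not a bottom element, so it lies in the domain of the lower-half map. Then $\phi(x)=100010$, the necklace whose two $1$'s are at cyclic distance $2$. On the other hand $\bar x=001101$ is the necklace of $110100$, whose unique $\phi$-preimage is $110110$, and $\overline{110110}=001001$ is the necklace $100100$ whose two $1$'s are at cyclic distance $3$. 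Hence $\phi(x)\ne\overline{\phi^{-1}(\bar x)}$, and the proposed reduction to upper-half injectivity does not go through.
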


\begin{proof}
To prove injectivity on the bottom half, we show how to define the inverse map.  Given
an element $u$ with at least as many 0's as 1's, 
we begin by finding the unique element $v$  of the 
same chain that is exactly as many ranks above the middle as $u$ is below the middle.
This is done as follows.  Obtain a parenthesization of $u$ by pairing any 0 immediately
followed by a 1 cyclically, and keep repeating until all unpaired letters are 0's.  Now 
obtain $v$ from $u$ by changing all these unpaired letters from 0's to 1's.  If the 
difference in ranks between $v$ and $u$ is $r$, then $\phi^r (v)=u$ and 
$\phi^{-1}(u) = \phi^{r-1}(v)$.  By induction, we may assume $\phi $ is well-defined at 
all ranks strictly above $u$, enabling us thereby to use the uniqueness of $v$ to 
prove that $\phi^{-1}(u)$ is unique.  This also 
enables us to apply $\phi $ in a well-defined manner to $u$.
\end{proof}

We now state our main result:

\begin{theorem}
The quotient poset $B_n/C_n$ has a symmetric chain decomposition such that 
$u,v\in B_n/C_n$ with $\rank(u) < \rank (v) $ 
belong to the same chain if and only if $u = \phi^r (v)$ for some $r$.
\end{theorem}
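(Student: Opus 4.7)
The plan is to take the chains in the decomposition to be the $\phi$-orbits and verify the three requirements of a symmetric chain decomposition: that the orbits partition $B_n/C_n$, that each is a saturated chain, and that each is symmetrically placed about the middle rank. The injectivity of $\phi$ on both the upper and lower halves established in the previous propositions (together with injectivity between middle ranks when $n$ is odd) immediately ensures that the orbits are totally ordered and disjoint. Saturation is automatic: since $\phi(x)$ is obtained from $x$ by changing a single $1$ to a $0$ in some cyclic rotation, we have $\phi(x) \le x$ in $B_n/C_n$ with $\rank(x) - \rank(\phi(x)) = 1$, so $\phi(x) \lessdot x$. The same-chain characterization in the theorem then holds by the very definition of the chains as $\phi$-orbits.

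The real content is the symmetric placement. First I would argue that every top of a chain -- an element $v$ not in the image of $\phi$ -- has $\rank(v) \ge n/2$. Indeed, if $\rank(v) < n/2$, then the inverse-map construction built into the lower-half injectivity proposition (pass from $v$ to the reflected word $v^{\ast}$ at rank $n - \rank(v)$ by flipping the unmatched $0$'s of $v$ to $1$'s, then iterate $\phi$) exhibits a concrete predecessor of $v$, contradicting top-ness.  Dually, every bottom of a chain lies at rank at most $n/2$. I would then prove that the chain starting at a top $v$ of rank $r$ terminates precisely at rank $n - r$.

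For this length computation, I would track the multiset of parenthesization pairs along the chain. By the Corollary following Lemma~\ref{non-cross-lemma}, each upper-half application of $\phi$ creates exactly one new parenthesization pair, with the sole exception of the transitional step across the middle when $n$ is odd, where the lone unmatched $1$ simply becomes an unmatched $0$ and no new pair is created. By the definition of $\phi$ on the lower half, each lower-half application undoes the most recently created pair. Treating the newly created pairs as a stack -- pushed during upper-half descent and popped during lower-half descent -- the chain continues until the stack empties, after which $\phi$ is no longer defined. A count of pushes against pops shows this takes $2r - n$ steps in total for both parities, so the bottom of the chain sits at rank $r - (2r - n) = n - r$, establishing symmetry.

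The main obstacle I expect is organizing the pair-tracking rigorously across the transitions between upper half, middle, and lower half, since the definition of $\phi$ on the lower half is itself given recursively relative to the chain coming from above. My plan is to handle this by induction on $r - \lceil n/2 \rceil$, verifying one additional step at a time and invoking the inductive hypothesis to give an unambiguous meaning to ``the most recently created pair'' at each lower-half rank along the chain.
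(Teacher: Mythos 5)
Your proposal is correct and follows essentially the same route as the paper: the chains are the $\phi$-orbits, and their symmetric placement comes from the push/pop structure of the parenthesization pairs built into the definitions of $\phi$ on the upper and lower halves, with your count of $2r-n$ steps from a top at rank $r$ being exactly the detail the paper compresses into the phrase ``immediate from the construction.'' The only divergence is in the last step, where the paper invokes rank symmetry of $B_n/C_n$ to conclude that the symmetric chains exhaust the bottom half, while you argue directly--via the same flip-the-unmatched-zeros inverse construction already used in the lower-half injectivity proposition--that no chain can have its top strictly below the middle rank; both arguments close the same gap.
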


\begin{proof}
The fact that the chains are symmetric is immediate from the construction.
We use rank symmetry of the poset to deduce that we have fully covered the bottom half
of the poset from the fact that we fully covered the top half.
\end{proof}

\section{Alternative description of $\phi $}

Next we develop some properties of the map $\phi $ that yield a much more
explicit alternative description of $\phi $, enabling us to establish a connection in
the next section to
the theory of crystal bases.  The upshot will be that in spite of 
our taking the lexicographically 
smallest representation of each element to which $\phi $ applies in a symmetric 
chain, we nonetheless end up proceeding down a symmetric chain  
simply by changing the unmatched 1's to 0's from right to left 
in the Lyndon expression for the highest element of the symmetric chain.  

\begin{proposition}\label{move-right}
Each time the map $\phi $ is applied turning a 1 into a 0 in a Lyndon word
$a_1\dots a_n$ to obtain a word $a_1'\dots a_n'$ with $a_i=1,a_i'=0$ and $a_j=a_j'$ for
$j\ne i$, the Lyndon expression for
$a_1'\dots a_n'$ will either 
shift the letter $a_i'$ to the right or leave its position unchanged.
\label{move-0-right}
\end{proposition}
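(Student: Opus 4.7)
The plan is to show that the Lyndon rearrangement of the modified word $a_1' \cdots a_n'$ cannot begin at any position $k$ with $2 \le k \le i$. Once this is established, the Lyndon rearrangement must begin either at position $1$, in which case $a_i'$ remains at position $i$, or at some position $k > i$, in which case $a_i'$ sits at position $n - k + 1 + i > i$ in the new expression. In either case the position of $a_i'$ is $\ge i$, so it has not moved to the left.

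Write $\rho_k(w) = w_k w_{k+1} \cdots w_n w_1 \cdots w_{k-1}$ for the rotation of $w$ starting at position $k$. Since $a = a_1 \cdots a_n$ is Lyndon, $\rho_1(a) \le_{\Lyndon} \rho_k(a)$ for every $k$. The central claim to prove is that for every $2 \le k \le i$ we also have $\rho_1(a') <_{\Lyndon} \rho_k(a')$, which rules out the forbidden starting positions.

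The key observation is that the flip $a_i = 1 \mapsto a_i' = 0$ appears in $\rho_1(a')$ at position $i$ but in $\rho_k(a')$ at the earlier position $i-k+1 < i$. Consequently, for every index $l < i-k+1$, $\rho_1(a')_l = \rho_1(a)_l$ and $\rho_k(a')_l = \rho_k(a)_l$. Let $p$ be the smallest position at which $\rho_1(a)$ and $\rho_k(a)$ differ. If $p < i-k+1$, then the Lyndon property forces $\rho_1(a)_p = 1$ and $\rho_k(a)_p = 0$, and this comparison carries over unchanged into $a'$, giving $\rho_1(a') <_{\Lyndon} \rho_k(a')$. Otherwise $p \ge i-k+1$, so $\rho_1(a)_l = \rho_k(a)_l$ for all $l \le i-k$, and at $l = i-k+1$ we have $\rho_k(a)_{i-k+1} = a_i = 1$; since no letter in our alphabet is strictly smaller than $1$, the Lyndon inequality forces $\rho_1(a)_{i-k+1} = 1$ as well, i.e., $a_{i-k+1} = 1$. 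Then in $a'$, position $i-k+1$ reads $\rho_1(a')_{i-k+1} = 1$ while $\rho_k(a')_{i-k+1} = a_i' = 0$, and all earlier positions agree (they involve neither index $i$ nor $i-k+1$), so again $\rho_1(a') <_{\Lyndon} \rho_k(a')$.

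The main obstacle I expect is keeping clean track of which rotation positions are affected by the flipped letter. The essential technical point is that $1$ is the smallest letter in our alphabet, which precludes $\rho_1(a)_{i-k+1}$ from being strictly smaller than $\rho_k(a)_{i-k+1} = 1$; it is this feature that lets the Lyndon comparison on $a$ transfer to the required strict comparison on $a'$ in the second case. Edge cases ($k = i$, where the "positions $\le i-k$" range is vacuous, and periodic $a$, where $p = \infty$) are handled uniformly by the argument above.
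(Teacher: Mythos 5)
Your proof is correct and takes essentially the same route as the paper: both arguments rule out a starting position $k$ with $2\le k\le i$ for the Lyndon rearrangement of $a'$ by comparing $\rho_1(a')$ with $\rho_k(a')$, using the Lyndon property of $a$ together with the fact that flipping $a_i=1$ to $0$ increases the word lexicographically. The paper compresses this into a single chain of prefix inequalities $a_k'\dots a_i' \le_{\Lyndon} a_1\dots a_{i-k+1} \le_{\Lyndon} a_k\dots a_i <_{\Lyndon} a_k'\dots a_i'$, of which your first-differing-position case analysis is an explicit unpacking (with the $k=i$ and periodic cases, which the paper glosses over, handled cleanly).
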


\begin{proof}
Suppose there is some $1<j<i$ such that the Lyndon expression for $a_1'\dots a_n'$ is
$a_j'\dots a_i'\dots a_n'a_1'\dots a_{j-1}'$.   Then 
$a_j'\dots a_i' \le_{\Lyndon} a_1'\dots a_{i-j+1}'  = a_1\dots a_{i-j+1} \le_{\Lyndon}
a_j\dots a_i <_{\Lyndon} a_j'\dots a_i'$, a contradiction.
\end{proof}

\begin{corollary}\label{nested-paren}
The series of parentheses added by successively applying the map $\phi $
in the top half of a Boolean algebra must be nested with respect to each other, with each successive step 
adding a new innermost parenthesis.
\end{corollary}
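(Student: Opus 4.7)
The plan is geometric. I will view the cyclic word as the boundary of a disc with the noncrossing parenthesization arcs drawn inside, so that the matching arcs together with arcs of the boundary circle subdivide the disc into regions. By Lemma~\ref{non-cross-lemma}, all unmatched $1$'s lie on the boundary of a single region $R$.

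First I would unpack the preceding corollary: applying $\phi$ leaves every existing matching arc intact and adds exactly one new arc, whose endpoints are the previously leftmost and rightmost unmatched $1$'s. Both endpoints sit on the boundary of $R$, and since the new arc is noncrossing with all existing arcs it must lie entirely inside $R$ and split it into two subregions. Applying Lemma~\ref{non-cross-lemma} to the updated word (which has one fewer unmatched $1$) forces the remaining unmatched $1$'s to lie on the boundary of a single one of these two subregions; call it $R'$. Then $R' \subsetneq R$, and the new arc forms part of $\partial R'$.

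The argument now iterates cleanly: a further application of $\phi$ (still within the top half) adds a new arc inside $R'$ by the same reasoning, hence strictly inside the arc added in the previous step. If $P_1, P_2, P_3, \ldots$ denote the arcs added at successive applications of $\phi$, induction gives $P_{k+1}$ nested strictly inside $P_k$ for every $k$, which is precisely the statement that these added parens are mutually nested with each new one the innermost.

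The main obstacle I anticipate is not conceptual but bookkeeping: one must be a little careful justifying the planar statements — that the noncrossing matching on the cycle really does subdivide the disc into well-defined regions, and that the new arc created by $\phi$ really does bound the shrunken region $R'$ containing the remaining unmatched $1$'s. Once these standard planarity facts are in hand, the proof is a direct consequence of Lemma~\ref{non-cross-lemma} and the preceding corollary. Proposition~\ref{move-right} does not appear strictly necessary for this route, though it could underpin an alternative combinatorial proof that tracks the position of the newly-created $0$ across successive Lyndon rearrangements.
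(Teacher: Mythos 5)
Your proof is correct, but it takes a genuinely different route from the paper's. The paper deduces the nesting directly from Proposition~\ref{move-right}: since the Lyndon rearrangement can only shift the newly created $0$ to the right, the remaining unmatched $1$'s stay strictly between the new $0$ and its partner (the formerly leftmost unmatched $1$), which is exactly the nesting claim read off from positions in the Lyndon word. You instead bypass Proposition~\ref{move-right} entirely and argue in the planar model: Lemma~\ref{non-cross-lemma} confines all unmatched $1$'s to a single region, the corollary preceding this one identifies the unique new arc (joining the extreme unmatched $1$'s), and Lemma~\ref{non-cross-lemma} applied to the updated word forces the surviving unmatched $1$'s into one of the two subregions the new arc cuts off, so induction gives a strictly decreasing chain of regions and hence nested arcs. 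The key step --- that the new arc must connect two \emph{cyclically adjacent} unmatched $1$'s in the sense that all others end up on one side --- is exactly what your appeal to Lemma~\ref{non-cross-lemma} on the updated word delivers, and this is sound. What your approach buys is independence from the Lyndon-rearrangement bookkeeping (you never need to know how the starting point of the word moves), at the cost of having to invoke standard planarity facts about noncrossing chord diagrams subdividing a disc; what the paper's approach buys is brevity and the fact that Proposition~\ref{move-right} is needed anyway for the subsequent corollary describing $\phi$ as proceeding right to left, so it comes for free there. Both arguments establish the statement as used later in the paper.
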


\begin{proof}
Proposition~\ref{move-0-right} implies that each time we turn a 1 into a 0 in the top half, creating a 
new parenthesized pair, the Lyndon rearrangement may only
move this pair farther to the right.  In particular, any remaining unmatched 1's must still be to
the left of this new 0 and to the right of its partner 1, yielding the desired nesting property.
\end{proof}

\begin{corollary}
The map $\phi $ proceeds from right to left through the initial Lyndon word, 
successively turning each 1 that is not initially paired with a 0 into a 0.
\end{corollary}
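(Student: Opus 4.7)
The plan is to track, at each application of $\phi$ along the chain, which of the originally unmatched 1's of the initial Lyndon word $w$ is converted to a 0. Let $p_1 < \cdots < p_k$ denote the positions of the unmatched 1's in $w = a_1 \dots a_n$. Since each parenthesization pair contains exactly one 0 and one 1, the count $k = 2n_1 - n$ equals the chain length minus one (where $n_1$ is the number of 1's in $w$), so there is exactly one application of $\phi$ per $p_i$. The claim reduces to showing that the applications of $\phi$ convert the $p_i$'s to 0's in the order $p_k, p_{k-1}, \ldots, p_1$.

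For the top-half applications, I would induct on the step number $j$, maintaining the invariant that before the $j$-th application the unmatched 1's of the current word are precisely $\{p_j, p_{j+1}, \ldots, p_{k-j+1}\}$ (at their original positions in $w$), and that the current Lyndon rearrangement has starting absolute position $s_j$ with all of these $p_l$'s lying on a single cyclic arc from $s_j$. The invariant forces the rightmost unmatched 1 in the Lyndon rearrangement to be the one with the largest original index, namely $p_{k-j+1}$, so $\phi$ converts $p_{k-j+1}$ to 0. By Lemma~\ref{non-cross-lemma} and Corollary~\ref{nested-paren}, the resulting 0 then matches cyclically with $p_j$ (the leftmost remaining unmatched 1), producing the new innermost pair $(p_{k-j+1}, p_j)$ and removing both $p_{k-j+1}$ and $p_j$ from the unmatched set. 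For the inductive step on the Lyndon starting position, Proposition~\ref{move-right} gives $s_{j+1} = s_j$ or $s_{j+1}$ cyclically past $p_{k-j+1}$ (viewed forward from $s_j$); in either subcase the remaining positions $p_{j+1}, \ldots, p_{k-j}$, which lie strictly between $s_j$ and $p_{k-j+1}$, continue to lie on a single arc from $s_{j+1}$.

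The top-half phase terminates once the current rank drops into the bottom half. At that point the nested parenthesization pairs created are $(p_k, p_1), (p_{k-1}, p_2), \ldots$, listed from outermost to innermost. Each subsequent application of $\phi$ is now governed by the bottom-half definition, which successively undoes the most recently created pair by converting its 1 to a 0; thus the pairs are undone from innermost outwards, converting $p_{\lfloor k/2 \rfloor}, p_{\lfloor k/2 \rfloor - 1}, \ldots, p_1$ into 0's in that order. (When $n$ is odd, the final top-half $\phi$ first converts the middle entry $p_{(k+1)/2}$ without creating a new pair, cleanly bridging the two phases.) Concatenating the two phases yields the desired right-to-left sequence $p_k, p_{k-1}, \ldots, p_1$.

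The main technical hurdle is verifying the single-arc invariant for the top-half induction; this rests essentially on Proposition~\ref{move-right} to constrain the shift of the Lyndon rearrangement at each step. Once this invariant is in place, the identification of the new matching partner with $p_j$ via Corollary~\ref{nested-paren}, and the reverse-order undoing in the bottom half via the very definition of $\phi$ there, are both straightforward.
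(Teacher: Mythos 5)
Your proof is correct and follows essentially the same route as the paper: Proposition~\ref{move-right} guarantees that the rightmost unmatched 1 of the original Lyndon word stays rightmost through the top half, and the nesting of the created pairs (Corollary~\ref{nested-paren}) together with the bottom-half definition of $\phi$ (undoing the most recently created pair) gives the continued right-to-left order below the middle rank. You have simply made explicit, via the position-tracking invariant, the details that the paper's proof leaves implicit.
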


\begin{proof}
This follows from Proposition~\ref{move-right}. In particular, the letters that comprise the left
parentheses of the pairs created while proceeding downward through the top half of 
the Boolean algebra only move to the right at least until after any such letter is switched 
from a 1 to a 0.  Therefore, a letter that is the rightmost unmatched 1 will continue to be
this under the Lyndon rearrangement in the top half of ranks. In addition, the nesting of parenthesis pairs 
as justified in Corollary~\ref{nested-paren} ensures the result for the bottom half of ranks.
\end{proof}

\section{Relation to crystals}

We now point out the resemblance of the parenthesization procedure of the previous section and the
signature rule for crystal bases, see for example~\cite{HK:2002}. Combinatorially,
$\mathfrak{sl}_2$ crystals can be viewed as the graph on words of finite length in the
letters $\{1,2\}$ with an arrow from word $w$ to word $w'$ if $w'$ is obtained from $w$ by changing the
rightmost unbracketed $1$ into a $2$, where now we successively bracket pairs $21$.
Identifying $2$ with $0$ from the previous section, this is precisely the parenthesization (up to the cyclic shift).
If there is an arrow from $w$ to $w'$ in the crystal graph, the Kashiwara lowering operator $f$ acts as
$f(w)=w'$. If there is no arrow from $w$, $f$ annihilates $w$. Similarly, the Kashiwara raising operator
$e$ acts as $e(w')=w$ or annihilates $w'$ if there is no incoming arrow to $w'$.

An element $x$ (resp. $y$) in the crystal is called highest (resp. lowest) weight if $e(x)=0$ (resp. $f(y)=0$).
The weight of an element is the number of $1$'s in the word minus the number of $2$'s.
If the weight of the highest weight element is $L$, then the crystal is $L+1$ dimensional.

There exists an involution on the crystal, called the Sch\"utzenberger involution in type $A$
or Lusztig involution for general types, which interchanges highest and lowest weight vectors $x$ and $y$
and the lowering and raising operators $f$ and $e$ (in our $\mathfrak{sl}_2$ setting).

Interpreting the map $\phi$ from the previous section as the Kashiwara lowering operator of a cyclic crystal,
we can characterize the highest weight elements in the cyclic crystal.

\begin{lemma}
Highest weight elements in the cyclic $\mathfrak{sl}_2$ crystal are Lyndon words where, if we do the cyclic bracketing,
only the last string of consecutive $0$'s is bracketed cyclically, i.e. with $1$'s at the beginning of the word. Furthermore, 
removing a cyclic bracket by turning the corresponding $0$ to $1$ would break the Lyndon condition.
\end{lemma}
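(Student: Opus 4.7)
I plan to establish the lemma by verifying both directions of the characterization. I work with the Lyndon representative of $x$ and write $x = 1^{a_1}0^{b_1}\cdots 1^{a_k}0^{b_k}$. Recall that $x$ is highest weight exactly when $x \notin \im(\phi)$, equivalently $x$ is the top of a symmetric chain in the SCD (Proposition~2.10 applied in reverse).

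For the forward direction, I would first observe that highest weight elements have non-negative weight, forcing the number of $1$'s in $x$ to be at least the number of $0$'s. A direct analysis of the linear stack-based matching shows that the trailing block $0^{b_k}$ is pushed without being popped (no $1$ follows linearly), so after the linear matching the unmatched $0$'s are exactly the trailing block, and the unmatched $1$'s sit at leading positions. The cyclic matching then wraps trailing $0$'s to unmatched $1$'s. I would argue that the condition for these partners to all lie in $1^{a_1}$ (the first assertion of the lemma) is $a_1 \geq b_k$: otherwise the wrap reaches past $1^{a_1}$ into a later $1$-block, and by cyclically rotating $x$ to bring a non-leading $1$ into play one can construct a Lyndon word $y$ of rank $\rank(x)+1$ with $\phi(y)=x$, contradicting the highest-weight hypothesis.

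Next I would establish the ``removing a cyclic bracket breaks Lyndon'' assertion by contraposition. Suppose some cyclically bracketed trailing $0$ at position $p$ can be flipped to $1$ yielding a Lyndon word $y$. I claim $\phi(y)=x$. Under the flip, the linear brackets of $x$ survive; a new linear pair $(p-1,p)$ appears in $y$ (or $p$ is left unmatched if $p$ is the first trailing $0$); every cyclic bracket of $x$ except the one containing $p$ is preserved. The unmatched $1$'s of $y$ then consist of those of $x$ together with $p$ itself and the former cyclic partner of $p$, all of which lie to the left of $p$ except $p$. Thus $p$ is the rightmost unmatched $1$ of $y$, and $\phi$ flips $p$ back to $0$, returning $x$ (no re-Lyndonification is needed since $y$ agrees with $x$ except at $p$ and $x$ is already Lyndon). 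This contradicts highest weight.

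For the converse direction, I would suppose both stated conditions hold but $\phi(y)=x$ for some Lyndon $y$. Proposition~\ref{move-right} asserts that the Lyndonification shift after flipping $y$'s rightmost unmatched $1$ moves its position to the right or leaves it fixed; I trace the flipped position forward to a $0$-position $p$ in $x$. The hypothesis that cyclic brackets only pair trailing $0$'s with leading $1$'s forces $p$ into the trailing block and into a cyclic bracket, and Lyndon-ness of $y$ then amounts to flipping $x_p$ to $1$ producing a Lyndon word, contradicting the second stated condition. The main obstacle throughout is the combinatorial case analysis, split by whether $p$ is the first trailing $0$ or a later one, needed to confirm that the rightmost unmatched $1$ of $y$ lands at $p$ in the forward direction and dually that the Lyndonification shift lands on a cyclically bracketed $0$ in the converse; both require tracking how the linear stack matching changes under a single bit flip in the trailing block, using the non-crossing property of cyclic matching and the structural hypothesis $a_1 \geq b_k$ to rule out spurious non-leading unmatched $1$'s in the relevant range.
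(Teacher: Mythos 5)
Your overall strategy (characterize highest weight as $x\notin\im(\phi)$ and verify both directions) is reasonable and close in spirit to the paper, which derives the lemma from Proposition~\ref{move-right} and Corollary~\ref{nested-paren}. However, two of your key structural claims are false, and each breaks a direction of your argument. First, it is not true that after the linear (non-cyclic) matching the unmatched $0$'s are exactly the trailing block $0^{b_k}$, nor that the first assertion of the lemma reduces to $a_1\geq b_k$. Take $x=1111100011000$ (Lyndon, seven $1$'s, six $0$'s, $a_1=5\geq b_k=3$): the linear matching pairs only $(8,9)$ and $(7,10)$, leaving the $0$ at position $6$ --- which lies in the \emph{first} zero-block --- linearly unmatched; the cyclic matching then pairs it with the $1$ at position $4$. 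So a non-trailing $0$ is cyclically bracketed even though $a_1\geq b_k$, the first condition of the lemma fails, and indeed $x=\phi(1111110011000)$ is not highest weight. Your criterion would wrongly certify condition (1) here, so the forward direction as you set it up does not go through; one genuinely needs the fact (from Proposition~\ref{move-right} and Corollary~\ref{nested-paren}) that the brackets created by $\phi$ are nested cyclic brackets joining trailing $0$'s to leading $1$'s, rather than an ad hoc count of block lengths.

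Second, in your contraposition step the claim ``$\phi(y)=x$'' for \emph{any} cyclically bracketed trailing $0$ at position $p$ whose flip preserves the Lyndon property is false, and your own description is internally inconsistent: you note that a new linear pair $(p-1,p)$ forms when $p$ is not the first trailing $0$, yet you then list $p$ among the unmatched $1$'s of $y$ --- it cannot be both matched and unmatched. Concretely, take $x=111011000$ with cyclic brackets $(9,1),(8,2),(7,3)$. Flipping the cyclically bracketed $0$ at $p=8$ gives the Lyndon word $y=111011010$, in which the new $1$ at position $8$ is immediately matched with the $0$ at position $7$; the rightmost unmatched $1$ of $y$ is at position $6$, so $\phi(y)=111010010\neq x$. (The word $x$ is still not highest weight, but its actual preimage is $111011100$, obtained by flipping position $7$.) So exhibiting \emph{some} Lyndon-preserving flip does not by itself produce a $\phi$-preimage; you must argue that the specific bracket undone by $\phi^{-1}$ --- the most recently created, nested one identified in Corollary~\ref{nested-paren} --- is the one whose removal preserves the Lyndon condition. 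Both directions of your proof need to be rebuilt on that nesting property rather than on the trailing-block/$a_1\geq b_k$ analysis.
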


\begin{proof}
By Proposition~\ref{move-0-right} and Corollary~\ref{nested-paren}, the letters after application of $\phi$ move to the 
right after the rearrangement into a Lyndon word, and the parentheses that are added are nested. This proves the claim.
\end{proof}

\begin{example}
The word $1111001110001110$ is highest weight in the cyclic crystal, since the parenthesization yields:
\begin{equation*}
\begin{array}{cccccccccccccccc}
	1&1&1&1&0&0&1&1&1&0&0&0&1&1&1&0\\
	)&&&&(&(&)&)&&(&(&(&)&)&)&(
\end{array}
\end{equation*}
Changing the last 0 to a 1 would break the Lyndon condition.

The word $11100010110$ on the other hand is not highest weight since in
\begin{equation*}
\begin{array}{ccccccccccc}
	1&1&1&0&0&0&1&0&1&1&0\\
 	)&)&&(&(&(&)&(&)&)&(
\end{array}
\end{equation*}
the open bracket in position 4 and the closed bracket in position 2 are matched, which
go across the cycle where the 0 is not at the end.
In fact, this word would belong to the string of highest weight element
\begin{equation*}
\begin{array}{ccccccccccc}
1&1&1&1&0&0&1&0&1&1&0\\
 )&&&&(&(&)&(&)&)&(
\end{array}
\end{equation*}

The following word is not highest weight, since one can remove a cyclic bracket without
breaking the Lyndon property
\begin{equation*}
\begin{array}{cccccccc}
1&1&1&1&0&1&0&0\\
 )&)&&&(&)&(&(
\end{array}
\end{equation*}
The corresponding highest weight element is $11110110$.
\end{example}

\begin{corollary}\mbox{}
\begin{enumerate}
\item
The map $\phi$ is a natural cyclic analogue of the lowering operators from the theory of
crystal bases.
\item
Lusztig's involution sends elements in the upper (resp. lower) half of a
symmetrically placed chain to elements in the lower (resp. upper) half of a chain in the dual symmetric chain decomposition.
Notice that these two symmetric chain decompositions which are obtained from
each other via the Lusztig involution are typically different from each other.
\end{enumerate}
\end{corollary}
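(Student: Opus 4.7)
For part (1), the plan is to invoke the alternative description of $\phi$ developed in Section 3. The last corollary of that section asserts that $\phi$ acts on the Lyndon representative of a cyclic class by turning the unmatched $1$'s into $0$'s from right to left, where ``unmatched'' refers to the cyclic bracketing of $01$ pairs. Comparing this with the signature rule recalled at the beginning of Section 4 --- the $\mathfrak{sl}_2$ Kashiwara lowering operator $f$ brackets pairs $21$ in a word over $\{1,2\}$ and changes the rightmost unbracketed $1$ to a $2$ --- and using the identification $2 \leftrightarrow 0$, one sees that the two procedures agree modulo exactly two modifications: (i) the bracketing is performed cyclically, and (ii) we always work with a Lyndon representative of the cyclic equivalence class. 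No further calculation is needed; the analogy is established by direct comparison.

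For part (2), the plan is to translate standard properties of Lusztig's involution $\iota$ into the language of our SCD. Recall from the preceding discussion that $\iota$ is a weight-reversing bijection on the crystal which intertwines $f$ and $e$, and hence sends highest weight vectors to lowest weight vectors. A symmetrically placed chain $\{x,\phi(x),\ldots,\phi^L(x)\}$ in our SCD is precisely a single $\mathfrak{sl}_2$-string under the crystal structure from part (1): $x$ is the top (highest weight) element and $\phi^L(x)$ is the bottom (lowest weight) element. Applying $\iota$ pointwise produces a rank-reversed string $\{\iota(x),\ldots,\iota(\phi^L(x))\}$ whose new top is $\iota(\phi^L(x))$ and whose new bottom is $\iota(x)$. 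Because $\iota$ is a bijection on $B_n/C_n$, the collection of image strings again partitions the poset into symmetrically placed chains --- this is by definition the dual SCD --- and by the weight-reversal property, $\iota$ sends the upper (resp.\ lower) half of each original chain into the lower (resp.\ upper) half of its image chain, which is the claim.

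To justify the closing parenthetical remark that the two SCDs are typically different, I would exhibit a small example (e.g.\ for $n=5$ or $n=6$) and observe directly that some chain in our SCD is not $\iota$-invariant; since the cyclic crystals here are not in general self-dual, any generic example should suffice. The conceptual content of the corollary is essentially a repackaging of standard crystal facts once part (1) is in hand, so the only real ``obstacle'' is the bookkeeping needed to verify the example, which I do not expect to be onerous.
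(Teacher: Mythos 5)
The paper offers no explicit proof of this corollary—it is presented as an immediate consequence of the signature-rule comparison and the properties of the Lusztig involution recalled at the start of Section 4—and your argument fleshes out exactly that reasoning: part (1) by matching the cyclic bracketing against the $\mathfrak{sl}_2$ signature rule via $2\leftrightarrow 0$, and part (2) by using that the involution reverses weight and intertwines $e$ and $f$, hence carries each $\phi$-string to a rank-reversed string of the dual decomposition. This is correct and is essentially the paper's (implicit) approach; the only loose end is that your example for the final "typically different" remark is promised rather than exhibited, but that remark is informal in the paper as well.
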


Our symmetric chain decomposition from the first part now yields the following:

\begin{theorem}
The $\mathfrak{sl}_2$ strings in the cyclic crystal yield a symmetric chain decomposition.
\end{theorem}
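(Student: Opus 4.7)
The plan is to recognize that this theorem is essentially a restatement of the main theorem of Section 2 in the language of crystals, so the proof should mostly be an unpacking of definitions combined with the symmetry of $\mathfrak{sl}_2$ weights.

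First, I would identify the $\mathfrak{sl}_2$ strings with the chains produced by the map $\phi$. Recall that, in the cyclic crystal interpretation of Section 4, the lowering operator $f$ acts exactly as the map $\phi$: it locates the rightmost unbracketed $1$ in the Lyndon representative and turns it into a $0$. Since an $\mathfrak{sl}_2$ string is by definition a maximal orbit of the form $\{x, f(x), f^2(x), \ldots, f^L(x)\}$ starting from a highest weight element $x$ (with $e(x) = 0$) and ending at a lowest weight element, the strings coincide set-theoretically with the chains $\{v, \phi(v), \phi^2(v), \ldots\}$ built in Section 2.

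Next, I would verify that each string is a \emph{symmetric} chain in $B_n/C_n$. The rank function on $B_n/C_n$ is the number of $1$'s, and the $\mathfrak{sl}_2$ weight of a word is $(\text{number of }1\text{'s}) - (\text{number of }0\text{'s}) = 2\,\mathrm{rank} - n$. Thus an $\mathfrak{sl}_2$ string whose highest weight element has weight $L$ consists of elements of weights $L, L-2, \ldots, -L$, which corresponds to ranks $\frac{n+L}{2}, \frac{n+L}{2}-1, \ldots, \frac{n-L}{2}$. These ranks are symmetric about $n/2$, so each string is a saturated chain whose top and bottom ranks are equidistant from the middle rank.

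Finally, I would invoke the main theorem of Section 2 to conclude that these strings together cover every element of $B_n/C_n$ exactly once. Since the chain structure and the symmetry of ranks together constitute the definition of a symmetric chain decomposition, this completes the proof. The only mild subtlety, which would be the main point to double-check, is that the bracketing used in the crystal signature rule really does match the parenthesization used to define $\phi$ on both the upper and lower halves of ranks; but this equivalence was already established in Section 3, so no new work is required here.
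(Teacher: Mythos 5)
Your proposal is correct and follows exactly the route the paper intends: the theorem is a corollary of the Section 2 result once the cyclic lowering operator is identified with $\phi$ (via the Section 3 description), and your weight-versus-rank computation just makes explicit the symmetry the paper takes for granted. No gaps.
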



\end{document}